\documentclass{amsart}
\usepackage{amssymb}
\usepackage{graphicx}
\usepackage{amsmath,amsthm,amscd}
\pdfoutput=1 

\newtheorem{proposition}{Proposition}[section]

\newtheorem{lemma}[proposition]{Lemma}
\newtheorem{theorem}[proposition]{Theorem}

\DeclareMathOperator{\vol}{vol}
\DeclareMathOperator{\bdry}{\partial}
\DeclareMathOperator{\diam}{diam}
\DeclareMathOperator{\aplim}{ap \lim}
\DeclareMathOperator{\loc}{loc}
\DeclareMathOperator{\graph}{graph}

\DeclareMathOperator{\BV}{BV}
\DeclareMathOperator{\mass}{mass}
\DeclareMathOperator{\spt}{spt}
\DeclareMathOperator{\restrict}{\llcorner}

\newcommand{\norm}[1]{\left\| #1 \right\|}

\newcommand{\R}{\mathbb{R}}
\newcommand{\Rn}{\mathbb{R}^n}
\newcommand{\csubset}{\subset \subset}
\newcommand{\SMA}{\operatorname{SMA}}
\newcommand{\MA}{\operatorname{MA}}
\newcommand{\DC}{\operatorname{DC}}

\newcommand{\D}{\mathbb{D}}
\newcommand{\I}{\mathbb{I}}
\newcommand{\G}{\Gamma}

\newcommand{\eps}{\epsilon}
\newcommand{\e}{\epsilon}

\title[An extension of Alexandrov's theorem]{An extension of Alexandrov's theorem on second derivatives of convex functions}
\author{Joseph H.G. Fu}

\email{fu@math.uga.edu}
\date{\today}
\address{ Department of Mathematics, 
University of Georgia, 
Athens, GA 30602, USA}

\thanks{Partially supported
  by NSF grant DMS-1007580}

\begin{document}
\maketitle

\begin{abstract}If $f$ is a function of $n$ variables that is locally $L^1$ approximable by a sequence of smooth functions satisfying local $L^1$ bounds on the determinants of the minors of the Hessian, then $f$ admits a second order Taylor expansion almost everywhere. This extends a classical theorem of A.D. Alexandrov, covering the special case in which $f$ is locally convex.
\end{abstract}

\section{Introduction}  Let $U\subset \Rn$ be open. We say that $f:U\to \R$ is {\bf twice differentiable at $x$} if there is a quadratic polynomial $Q_x$ such that
\begin{equation}\label{2nd derivative}
\lim_{y\to x} \frac {f(y)- Q_x(y)}{|y-x|^{2}} = 0.
\end{equation}
 In the 1930s A.D. Alexandrov \cite{alexandrov} proved that if $f:U\to \R$ is a locally convex function on a domain $U\subset \R^n$ then $f$ is twice differentiable almost everywhere.
In the present article we extend this conclusion to a much larger class of functions. 

\begin{theorem}\label{main theorem} Let $U \subset \Rn$ be open, and $f_1, f_2,\dots \in C^2(U)$ converge in $L^1_{loc}(U)$ to a function
$f$. Suppose that the absolute integrals of all minors of the Hessians of the $f_k$ are uniformly locally bounded, i.e.
\begin{equation}\label{hess bound}
\int_K \left|\det \left(\frac{\partial^2 f_k}{\partial x_i\partial x_j}\right)_{i\in I, j\in J}\right| \le C(K), \quad k=1,2,\dots
\end{equation}
whenever $K \csubset U$ and $I,J\subset\{1,\dots,n\}$ have the same cardinality. Then
\begin{enumerate} \item \label{one}the $f_k$ are locally uniformly bounded,
\item\label{two}$f_k\to f$ pointwise a.e. in $U$,
\item\label{three} both $\limsup_{k\to \infty} f_k$ and $\liminf_{k\to \infty} f_k$ are twice differentiable at a.e. $x\in U$.
\end{enumerate}
\end{theorem}

Under the hypotheses of the theorem we will say that $f_1,f_2,\dots$ is a {\bf strong approximation} of $f$. Strongly approximable functions belong to the class of {\bf Monge-Amp\`ere functions} introduced by the present author in  \cite{fu89-I} and generalized by R. Jerrard in \cite{jerrard07a, jerrard07b}. They are distinguished by the existence of an integral current $\mathbb D( f)$ in the cotangent bundle $T^*U$, representing graph of its differential. The key point is that $\mathbb D(f)$ is determined uniquely (if it exists) by a short list of inevitable conditions.

{\bf Remarks.} 1. By the BV compactness theorem,  the local $L^1$ convergence implies local $W^{1,1}$ convergence of a subsequence.

2. If $n=1$ then $f$ is strongly approximable iff it is expressible as the difference of two convex functions. If $n = 2$ then any difference of convex functions is strongly approximable, but for $n\ge 2$ there exist strongly approximable functions that are not differences of convex functions. Furthermore there are pairs of strongly approximable functions whose sum is not strongly approximable (see below). If $n \ge 3$ then it is not known whether the a difference of convex functions is strongly approximable, or even whether it is Monge-Amp\`ere.
\vskip.05in

Alexandrov's theorem is a special case of Theorem \ref{main theorem}: if $f$ is convex then any locally uniform approximation by smooth convex functions (as may be obtained, for example, by convolution with an approximate identity) is a strong approximation. But Theorem \ref{main theorem} is significantly stronger: even though a convex function $f$ may fail to have first derivatives in the usual sense on a dense set, G. Alberti and L. Ambrosio \cite{alb-amb} observed  that such $f$ admits a multiple-valued differential everywhere, whose graph transforms into the graph of a Lipschitz function under the linear change of variable $(x,y) \mapsto (x+y , x-y)$ of $T^*\Rn\simeq \Rn\times \Rn$. From this point of view, Alexandrov's theorem appears as a consequence of Rademacher's theorem on the almost everywhere differentiability of Lipschitz functions. 

On the other hand, there exist strongly approximable functions displaying much wilder behavior. For example, they include the Sobolev space $W^{2,n}_{\loc}(U)$ of functions with second distributional derivatives in $L^n_{\loc}(U)$, since H\"older's inequality implies that convolution with an approximate identity yields a strong approximation.
 Meanwhile, Hutchinson-Meier \cite{HuMe} observed that if $n\ge 2$ then
\begin{equation}
f_{HM}(x_1,\dots,x_n) := x_1\sin \log \log |x|^{-1}
\end{equation}
belongs to $W^{2,n}(U)$ for sufficiently small neighborhoods $U$ of $0$, while $\nabla f_{HM}$ oscillates infinitely often between $(\pm 1,0,\dots,0) +o(x)$ as $x \to 0$  along the $x_1$ axis. 
This implies first of all that $f_{HM}$ cannot be expressed as a difference of two convex functions. Furthermore, by cutting off, translating, rotating and multiplying by suitable constants, it is easy to construct a $W^{2,n}(\Rn)$-convergent sum whose differential has a graph that is dense in $T^*\Rn$.  

For $n=2$ the function $g(x,y):= f_{HM}(x,y) + |y|$ is not Monge-Amp\`ere, and hence not strongly approximable. For the restriction of $g$ to $\R^2-\{0\}$ is Monge-Amp\`ere, and along the $x_1$ axis the fibers of the differential current $\mathbb D(g)$ include the line segments with endpoints $\nabla f_{HM} \pm (0,1)\in \R^2$. In view of the oscillation described above it follows that $\mathbb D(g)$ does not have finite mass above any neighborhood of the origin. This example is easily extended to higher dimensions.

 It is natural to conjecture that Theorem \ref{main theorem} applies to all Monge-Amp\`ere functions  --- indeed it seems plausible that every Monge-Amp\`ere function is strongly approximable. However, this is just one of many perplexing questions about Monge-Amp\`ere functions: for example, if $n > 2$ we do not even know whether such functions are necessarily continuous, or even locally bounded.

{\bf Acknowledgements.} I would like to thank Bob Jerrard for stimulating conversations on the topics discussed here, and also the Universit\`a di Trento for their hospitality during part of this work. I am extremely grateful also to the referee, who provided a much shorter and more direct proof of the main theorem. In fact the new proof yields a stronger statement than the original.

\section{Basic facts}\label{subsect:lemmas} 

\subsection{Some measure theory} Put  $B(x,r)\subset \Rn$ for the open ball of radius $r$ about $x$, and $\omega_n$ for the volume of $B(0,1)$. Recall that if $\mu$ is a Radon measure on $U \subset \Rn$ then its {\it density} at $x\in U$ is
$$
\Theta(\mu,x):= \lim_{r\downarrow 0} \frac {\mu(B(x,r))}{\omega_n r^n},
$$
provided the limit exists.
In fact the limit exists for a.e. $x \in U$ with respect to Lebesgue measure, and defines a Lebesgue-integrable function of $x$, whose integrals yield the absolutely continuous part of $\mu$ with respect to the Lebesgue decomposition into absolutely continuous and singular parts (cf. \cite{folland}, Thm. 3.22). In particular, if $\mu$ is singular with respect to the Lebesgue measure then
$\Theta(\mu,x) = 0$ for a.e. $x \in \Rn$.

\subsection{Absolute Hessian determinant measures} Suppose that $f_1,f_2,\dots \to f$ is a strong approximation. For $d=0,\dots,n$, and $k=1,2,\dots$, we define the measures $\nu_{k,d}$ on $U$ by
\begin{equation}\nu_{k,d}(S):=\sum_{I,J\subset \{1,\dots,n\}, |I|=|J|= d}\int_S  \left|\det \left[\frac{\partial^2 f_k}{\partial x^i
\partial x^j}\right]_{i \in I, j \in J} \right|.
\end{equation}
Taking subsequences, we may assume that each sequence $\nu_{k,d}, k = 1,2,\dots$, converges weakly to a Radon measure $\nu'_d,\ d =0,\dots,n$. We will refer to any Radon measure $\nu_d\ge \nu'_d$ as an {\bf absolute Hessian determinant measure of degree $d$} for the strong approximation $f_1,f_2,\dots \to f$. When the approximation is understood we will also refer to such $\nu_d$ as an  absolute Hessian determinant measure of degree $d$ for $f$.

For quadratic polynomials $Q:\R^n\to \R$, put $\norm Q$ to be the maximum of the absolute values of the coefficients. The following lemma is obvious. 

\begin{lemma}\label{quadratic perturbation} There are constants $ C_{n,d}$ with the following property. Let $f\in L^1_{loc}(U)$ be strongly approximable, and let $\nu_1,\dots,\nu_n$ be absolute Hessian determinant measures for $f$ arising from a strong approximation $f_1,f_2,\dots\to f$. Given any quadratic polynomial $Q$, the sequence $f_1+Q,f_2+Q,\dots$ is then a strong approximation of $f+Q$, and the measures
\begin{equation}
\tilde \nu_d := C_{n,d} \sum_{i+j = d} \norm Q^i \nu_j
\end{equation}
are absolute Hessian determinant measure of degrees $d=1,2,\dots,n$ for this approximation.
\end{lemma}
{\bf Remark.} Since the number of $d \times d $ minors of  an $n\times n$ matrix is $\binom n d^2$, and there are $\sum_{i=0}^d \binom d i^2 = \binom {2d} d$ subminors of each such minor, the rather extravagant value $C_{n,d} = d!\binom n d^2 \binom {2d} d$ works.

\subsection{An inequality from multivariable calculus} The key fact that makes the main theorem work is the following elementary classical inequality about $C^2$ functions.

\begin{lemma}\label{lem:alex} Let $U\subset \Rn$ be open, and $F\in C^2(U)$. If $V \subset\subset U$ is open then
\begin{equation} \int_V \left|\det D^2 F\right| \ge \omega_n \left(\frac{\sup_V | F| - \sup_{\bdry V}| F|}{\diam V} \right)^n
\end{equation}
\end{lemma}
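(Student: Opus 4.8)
The plan is to prove the inequality first for smooth functions, in a form localized to a superlevel set, and then transfer it to $\nu_n$ by weak approximation; the one delicate point is that weak convergence of measures cooperates with closed sets but not directly with the open set $V$.

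First I would record the classical $C^2$ estimate in the following shape. Suppose $g$ is $C^2$ on a neighborhood of the compact set $\overline V$ and that $\sup_{\overline V}g = M > M' > \sup_{\bdry V}g$, and put $W' := \{x\in V : g(x) > M'\}$. Since $g < M'$ on $\bdry V$, the set $W'$ is open and $\overline{W'}$ is a compact subset of $V$, while $g$ attains $M$ at some $x_0\in W'$. For every $p\in\R^n$ with $|p| < (M-M')/\diam W'$, the affine perturbation $x\mapsto g(x)-\langle p,x\rangle$, evaluated at $x_0$, strictly exceeds its value at every point of $\bdry W'$ (where $g\le M'$), so it attains its maximum over $\overline{W'}$ at an interior point, at which $\nabla g = p$ and $D^2 g\le 0$. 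Hence $\nabla g$ maps $\{x\in W' : D^2 g(x)\le 0\}$ onto a set containing the ball $B(0,(M-M')/\diam W')$, and the area formula gives
$$\int_{W'}\left|\det D^2 g\right|\ \ge\ \omega_n\left(\frac{M-M'}{\diam W'}\right)^n\ \ge\ \omega_n\left(\frac{M-M'}{\diam V}\right)^n.$$

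Next I would normalize and transfer. If the numerator $\sup_V|f|-\sup_{\bdry V}|f|$ is non-positive the claim is trivial, so assume it positive; then $\sup_{\overline V}|f|$ is attained at an interior point, and, replacing $f$ and every $f_k$ by its negative if necessary --- which changes neither $|f|$, $|f_k|$, nor, since $|\det(-A)|=|\det A|$, the measures $\nu_{n,k}$ and their weak limit $\nu_n$ --- I may assume $\sup_{\overline V}|f| = \sup_V f = M = f(x_0)$ for some $x_0\in V$, with $\sup_{\bdry V}f\le\sup_{\bdry V}|f| =: m < M$; it suffices to prove $\nu_n(V)\ge\omega_n((M-m)/\diam V)^n$. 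Now fix $M'\in(m,M)$, choose $\delta>0$ with $M'-\delta>m$, and set $W := \{x\in V : f(x) > M'-\delta\}$, whose closure is a compact subset of $V$, hence of $U$. Because $f_k\to f$ uniformly on $\overline V$, for all large $k$ one has $\sup_{\bdry V}f_k < M' < \sup_{\overline V}f_k$ and $\{x\in\overline V : f_k(x)>M'\}\subset W$; since the only term with $|I|=|J|=n$ in the definition of $\nu_{n,k}$ is $\int|\det D^2 f_k|$, the smooth estimate applied to $g=f_k$ with threshold $M'$ gives
$$\nu_{n,k}(\overline W)\ \ge\ \int_{\{f_k>M'\}}\left|\det D^2 f_k\right|\ \ge\ \omega_n\left(\frac{\sup_{\overline V}f_k - M'}{\diam V}\right)^n.$$
Letting $k\to\infty$ the right-hand side tends to $\omega_n((M-M')/\diam V)^n$, while compactness of $\overline W$ together with $\nu_{n,k}\rightharpoonup\nu_n$ yields $\nu_n(\overline W)\ge\limsup_k\nu_{n,k}(\overline W)$. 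Hence $\nu_n(V)\ge\nu_n(\overline W)\ge\omega_n((M-M')/\diam V)^n$, and letting $M'\downarrow m$ finishes the proof.

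I expect the only real obstacle to be the direction mismatch just exploited: weak convergence gives $\liminf_k\nu_{n,k}(G)\ge\nu_n(G)$ only for $G$ open --- the wrong inequality for bounding $\nu_n$ from below --- whereas $\limsup_k\nu_{n,k}(K)\le\nu_n(K)$ holds for $K$ compact; and the Hessian mass of the $f_k$ can genuinely concentrate near $\bdry V$, so one cannot pass to the limit in the smooth inequality written on $V$ itself. Replacing $V$ by a superlevel set $W$ with $\overline W\subset V$, on which the Alexandrov-type lower bound is already attained, is precisely what reconciles the two facts; the remaining ingredients --- the normal-mapping argument, the elementary supremum manipulations, and the area formula --- are routine.
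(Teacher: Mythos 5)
Your proof is correct and follows essentially the same route as the paper's: establish the tilted-maximum/normal-mapping estimate for the smooth approximants on a set compactly contained in $V$ where the supremum drop is (nearly) preserved, and then pass to the limit using $\limsup_k\nu_{n,k}(K)\le\nu_n(K)$ for compact $K$. The only difference is bookkeeping --- the paper shrinks $V$ to a nearby open $W\csubset V$ at the cost of an $\epsilon$, while you localize to superlevel sets at the cost of $M-M'$ --- and both correctly identify and resolve the same open-versus-compact issue in the weak convergence step.
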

\begin{proof} This is a weakened form of Lemma 9.2 of \cite{gt}.
%
\end{proof}

\section{An extension of a special case of a theorem of Calder\'on and Zygmund}\label{cz section}  

We say that $f:\R\supset U \to \R$ {\bf admits a $k$th derivative in the $L^1$ sense} at $x$ if there exists a polynomial $Q$ of degree $k$ such that
\begin{equation}\label{L1 2nd derivative}
r^{-n}{  \int}_{B(x,r)}\left| {f(y)- Q(y)} \right| \, dy= o(r^k)
\end{equation}
as $r\downarrow 0$. 


%

The next proposition generalizes a result of Calder\`on-Zygmund \cite{cz}.
The original result  (or rather the very special case of it that we have in mind) states that a function with distributional second derivatives in $L_{\loc}^1$ admits a second derivative in the $L^1$ sense a.e. By a straightforward adaptation of the argument of \cite{cz} we prove that this conclusion is true if the distributional second derivatives are only locally finite signed measures. Recall that the space $\BV_{\loc}(U)$ of functions of locally bounded variation consists of all locally integrable functions whose distributional gradients are (vector) measures (cf. \cite{giusti}). 

\begin{lemma} \label{diff lemma} If $g \in \BV_{\loc}(U)$ then $g$ is differentiable in the $L^1$ sense at a.e. $x\in U$.
\end{lemma}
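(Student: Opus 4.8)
The plan is to derive this from the Lebesgue decomposition of the (vector) measure $Dg$ together with the Poincar\'e inequality for $\BV$ functions and the density theory recalled in Section~\ref{subsect:lemmas}. Write $Dg=\nabla g\,\mathcal L^n+D^{s}g$, where $\nabla g\in L^1_{\loc}(U;\Rn)$ and $D^{s}g$ is singular with respect to Lebesgue measure. I would first single out the set $G\subset U$ of points $x_0$ at which (i) $x_0$ is a Lebesgue point of $g$ and of $\nabla g$, and (ii) $\Theta^{*}(|D^{s}g|,x_0)=0$. By the Lebesgue differentiation theorem and the fact recalled in Section~\ref{subsect:lemmas} that the density of a measure singular with respect to Lebesgue measure vanishes a.e., the complement $U\setminus G$ is Lebesgue null, so it is enough to prove $L^1$-differentiability at each $x_0\in G$.

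Fix such an $x_0$ and set $u(y):=g(y)-\nabla g(x_0)\cdot(y-x_0)$, so that $u\in\BV_{\loc}(U)$ with $Du=\mu:=(\nabla g-\nabla g(x_0))\,\mathcal L^n+D^{s}g$; properties (i)--(ii) give $|\mu|(B_r(x_0))\le\int_{B_r(x_0)}|\nabla g-\nabla g(x_0)|+|D^{s}g|(B_r(x_0))=o(r^{n})$, say $|\mu|(B_r(x_0))=r^{n}\e(r)$ with $\e(r)\to0$ and $\e$ bounded for small $r$. Writing $a_r$ for the average of $u$ over $B_r(x_0)$, the $\BV$ Poincar\'e inequality on balls (see \cite{giusti}) gives $\int_{B_r(x_0)}|u-a_r|\le C(n)\,r\,|\mu|(B_r(x_0))=C(n)r^{n+1}\e(r)$. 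From this, comparing $a_r$ with $a_{r/2}$ and telescoping over dyadic scales, one obtains $|a_r-u^{*}(x_0)|\le C'\,r\sum_{i\ge0}2^{-i}\e(2^{-i}r)$, where $u^{*}(x_0)=\lim_{\rho\downarrow0}a_\rho$ is the Lebesgue value of $u$; since $\e$ is bounded and $\e(2^{-i}r)\to0$ as $r\downarrow0$ for each fixed $i$, dominated convergence makes the sum tend to $0$, i.e. $|a_r-u^{*}(x_0)|=o(r)$. Combining, $\int_{B_r(x_0)}|u-u^{*}(x_0)|\le\int_{B_r(x_0)}|u-a_r|+\omega_n r^{n}|a_r-u^{*}(x_0)|=o(r^{n+1})$, which unwinds to $r^{-n}\int_{B_r(x_0)}|g(y)-u^{*}(x_0)-\nabla g(x_0)\cdot(y-x_0)|\,dy=o(r)$: thus $g$ is differentiable in the $L^1$ sense at $x_0$ with affine polynomial $Q_{x_0}(y)=u^{*}(x_0)+\nabla g(x_0)\cdot(y-x_0)$.

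I expect the main obstacle to be the stabilization of the averages $a_r$ at the rate $o(r)$: the polynomial $Q_{x_0}$ must be chosen once and for all rather than scale-by-scale, and the bare Lebesgue-point property only gives $a_r\to u^{*}(x_0)$ without a rate. It is the telescoping estimate, driven by the decay $|Du|(B_r(x_0))=o(r^{n})$ coming from the Lebesgue decomposition, that supplies the rate. The remaining ingredients --- the Lebesgue decomposition of a $\BV$ derivative, the a.e. vanishing of the density of the singular part $|D^{s}g|$, and the Poincar\'e inequality for $\BV$ functions --- are all standard, so once the constant term is handled the argument is routine.
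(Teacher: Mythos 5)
Your proof is correct, but it takes a genuinely different route from the paper's. The paper argues by blow-up and compactness: it rescales $g$ about a good point, invokes the $\BV$ compactness theorem to extract an $L^1_{\loc}$-convergent subsequence of dilates, identifies the limit as the linear function $\lambda(x)=\sum_i \partial_i g(0)\,x_i$ via weak convergence of the rescaled gradients (the singular part disappearing because its density vanishes a.e.), and then upgrades subsequential to full convergence by uniqueness of the limit, concluding via the equivalence of Lemma \ref{equiv}. You instead run a quantitative Campanato-type iteration: the $\BV$ Poincar\'e inequality bounds $\int_{B_r}|u-a_r|$ by $r\,|Du|(B_r(x_0))=r^{n+1}\e(r)$, and the dyadic telescoping of the averages $a_r$ converts the decay $\e(r)\to 0$ into the rate $|a_r-u^*(x_0)|=o(r)$, which is exactly the point a bare Lebesgue-point hypothesis cannot supply --- you correctly identify this as the crux. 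Your set of good points (Lebesgue points of $g$ and $\nabla g$, vanishing density of $|D^sg|$) coincides with the paper's. What each approach buys: the paper's is shorter given the compactness machinery, but is soft and requires identifying the blow-up limit; yours is elementary, avoids compactness entirely, yields an explicit modulus of differentiability in terms of $\e(r)=|Du|(B_r(x_0))/r^n$, and handles the constant term cleanly through the Lebesgue value $u^*(x_0)$ rather than through a normalization $g(0)=0$. Both are standard and both are complete; no gap.
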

\begin{proof} This is an immediate consequence of Thm. 6.1.1 of \cite{ev-gar}.
\end{proof}

\begin{proposition}\label{cz prop} If the distributional gradient of $f \in L^1(U)$ lies in $\BV_{\loc}(U)$, then $f $  admits a second derivative in the $L^1$ sense a.e. in $U$.
\end{proposition}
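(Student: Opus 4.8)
I sketch the proof I would give. The plan is to deduce the second-order $L^1$ expansion of $f$ from a first-order $L^1$ expansion of $\nabla f$, obtaining the latter by the averaging argument of \cite{cz} in the guise of the Poincar\'e inequality; the one genuinely new feature, compared with the classical $W^{2,1}$ case, is a singular part in the distributional Hessian, and it will turn out to be harmless.

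\textbf{Setup.} Since $\nabla f \in \BV_{\loc}(U)$, the distributional Hessian $D^2 f := D(\nabla f)$ is a matrix-valued Radon measure on $U$, symmetric as a matrix of measures because mixed distributional derivatives commute. Write its Lebesgue decomposition $D^2 f = A\,\mathcal L^n + \sigma$, with $\mathcal L^n$ Lebesgue measure, so $A \in L^1_{\loc}(U)$ is $\mathcal L^n$-a.e.\ symmetric and $\sigma$ is singular. By Lebesgue differentiation, $\mathcal L^n$-a.e.\ $x$ is simultaneously a Lebesgue point of $f$, of $\nabla f$ and of $A$, with $A(x)$ symmetric, and satisfies $\Theta^*(|\sigma|,x)=0$ (recall from \S\ref{subsect:lemmas} that a measure singular with respect to $\mathcal L^n$ has vanishing density $\mathcal L^n$-a.e.). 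I will show that every such $x$ satisfies \eqref{L1 2nd derivative} with $k=2$ and
\[
Q_x(y) := f^*(x) + \nabla^* f(x)\cdot(y-x) + \tfrac12\,(y-x)^{\!\top}A(x)(y-x),
\]
$f^*(x)$ and $\nabla^* f(x)$ denoting the Lebesgue values; since $\mathcal L^n$-a.e.\ $x$ qualifies, this proves the proposition.

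\textbf{The averaging steps.} At such an $x$ I would first prove the first-order expansion
\begin{equation}\label{proposal first order}
r^{-n}\!\int_{B_r(x)}\bigl|\nabla f(y) - \nabla^* f(x) - A(x)(y-x)\bigr|\,dy = o(r)\qquad(r\downarrow 0).
\end{equation}
The vector field $g(y):=\nabla f(y)-A(x)(y-x)$ has distributional gradient $D^2 f - A(x)\mathcal L^n$, and
\[
\bigl|D^2 f - A(x)\mathcal L^n\bigr|\bigl(B_r(x)\bigr) \le \int_{B_r(x)}|A(y)-A(x)|\,dy + |\sigma|\bigl(B_r(x)\bigr) = o(r^n),
\]
the two terms being $o(r^n)$ because $x$ is a Lebesgue point of $A$ and because $\Theta^*(|\sigma|,x)=0$; this is the only place the singular part enters, and it is absorbed merely by having discarded a null set. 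The Poincar\'e inequality for $\BV$ functions (cf.\ \cite{giusti}) then gives $r^{-n}\int_{B_r(x)}|g - c_r| \le C_n\, r\, r^{-n}\bigl|D^2 f - A(x)\mathcal L^n\bigr|(B_r(x)) = o(r)$, where $c_r$ is the mean of $g$ over $B_r(x)$. Comparing this bound at radii $r$ and $r/2$ yields $|c_r - c_{r/2}| = o(r)$, so $\{c_r\}$ is Cauchy as $r\downarrow 0$; its limit is $\nabla^* f(x)$ (means of $A(x)(y-x)$ over balls about $x$ vanish) and $c_r - \nabla^* f(x) = o(r)$, which with the Poincar\'e bound gives \eqref{proposal first order}. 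Because $A(x)$ is symmetric, $\nabla Q_x(y) = \nabla^* f(x) + A(x)(y-x)$, so \eqref{proposal first order} says precisely $r^{-n}\int_{B_r(x)}|\nabla(f-Q_x)| = o(r)$. Applying the Poincar\'e inequality again, to $f - Q_x \in W^{1,1}_{\loc}(U)$, produces means $d_r$ of $f-Q_x$ over $B_r(x)$ with $r^{-n}\int_{B_r(x)}|f-Q_x-d_r| \le C_n\,r\cdot o(r) = o(r^2)$; the same dyadic comparison shows $d_r$ converges, necessarily to $f^*(x) - Q_x(x) = 0$ with $d_r = o(r^2)$, and hence $r^{-n}\int_{B_r(x)}|f-Q_x| = o(r^2)$, which is \eqref{L1 2nd derivative} with $k = 2$.

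\textbf{Where the care is needed.} No single step is deep, and ``straightforward adaptation of \cite{cz}'' is a fair description, but two bookkeeping points deserve attention. The first is the dyadic-comparison device used twice above to make the lower-order coefficients of $Q_x$ scale-independent: one must verify that an error which is $o(\rho^j)$ at each dyadic radius $\rho = r2^{-i}$ sums over $i$ to something still $o(r^j)$, a dominated-convergence estimate for the resulting series that genuinely needs true little-$o$'s --- this is why one works at Lebesgue points rather than at points of merely bounded density. The second is confirming that $\Theta^*(|\sigma|,x) = 0$ really does make the singular part of $D^2 f$ negligible on small balls, so that the $L^1$-Poincar\'e scheme of \cite{cz}, designed for $D^2 f \in L^1_{\loc}$, applies verbatim once $x$ is chosen in the right full-measure set. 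I would expect the former to be the more delicate of the two.
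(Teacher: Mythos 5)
Your argument is correct, and it follows the same two-stage architecture as the paper --- first a first-order $L^1$ expansion of $\nabla f$ at a.e.\ point, then an integration step upgrading this to a second-order expansion of $f$ --- but it executes both stages with different tools. For the first stage the paper (Lemma \ref{diff lemma}) blows up: it dilates $g=\nabla f$, invokes the $\BV$ compactness theorem to extract an $L^1_{\loc}$ limit of the rescalings, identifies the limit as the linear function determined by the absolutely continuous part of $Dg$ at a point where the singular part has density zero, and then uses Lemma \ref{equiv}; you instead run the Poincar\'e inequality for $\BV$ functions on $g(y)=\nabla f(y)-A(x)(y-x)$ together with a dyadic telescoping of the ball averages $c_r$. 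Your version is more quantitative and avoids compactness (so there is no subsequence-independence issue to address), at the cost of the bookkeeping you correctly flag, namely that $\sum_i o(r2^{-i})=o(r)$; the paper's version gets the identification of the limit for free from weak convergence of the gradients. For the second stage the paper integrates radially, writing $f(rv)=\int_0^r Df(sv)\cdot v\,ds$ and controlling $G(\rho)=\int_{B_\rho}|\nabla f|\,|x|^{1-n}\,dx=o(\rho^2)$ by an integration by parts against $F(\rho)=\int_{B_\rho}|\nabla f|=o(\rho^{n+1})$; you simply apply Poincar\'e a second time to $f-Q_x\in W^{1,1}_{\loc}$ and telescope once more. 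Both are legitimate; your route is arguably more uniform (the same lemma used twice) and makes explicit where the singular part of $D^2f$ enters (only through $\Theta^*(|\sigma|,x)=0$), which in the paper is hidden inside the citation of \cite{folland}, Thm.~3.22. One small remark: the symmetry of $A(x)$ that you invoke to get $\nabla Q_x(y)=\nabla^*f(x)+A(x)(y-x)$ is not strictly needed, since you could replace $A(x)$ by its symmetrization in $Q_x$ without affecting the estimates; but as stated your use of the commutativity of mixed distributional derivatives is fine.
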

%
%

\begin{proof} By Lemma \ref{diff lemma} it is enough to show that there exists an $L^1$ quadratic Taylor approximation for $f$ at $0$ provided $\nabla f$ is differentiable in the $L^1$ sense at $0$. We may assume that $f(0) = \nabla f(0) = D^2f(0)=0$, where $D^2f(0)$ is the $L^1$ derivative of $\nabla f$ at $0$. Put 
$$
G(\rho):= \int_{B(0,\rho)} \frac{|\nabla f(x)|}{|x|^{n-1}}\, dx, \quad F(\rho):= \int_{B(0,\rho)} {|\nabla f(x)|}\, dx.
$$
Then $F,G$ are both absolutely continuous on $[0,\infty)$, with 
$G'(\rho)= \frac{F'(\rho)}{\rho^{n-1}},$
and $F(\rho) = o(\rho^{n+1})$. Integrating by parts, it follows that 
$G (\rho) = o(\rho^2)$ as $\rho\downarrow 0$. On the other hand,
\begin{align*}
\int_{B(0,\rho)} |f| &\le C\int_0^\rho r^{n-1}\, dr\, \int_{S^{n-1} } dv\, \int_0^r |Df(sv)|\,ds \\
&\le C\int_0^\rho r^{n-1}\, dr\, \int_{S^{n-1} } dv\, \int_0^\rho |Df(sv)|\,ds \\
&= C\rho^n  \int_{S^{n-1} } dv\, \int_0^\rho |Df(sv)|\,ds \\
&= C\rho^n G(\rho) = o(\rho^{n+2}),
\end{align*}
which gives the result.
\end{proof}

\section{Proof of Theorem \ref{main theorem}} For the rest of the paper we take as given the hypotheses of Theorem \ref{main theorem}.
\subsection{Proof of conclusion \eqref{one}}
The proof is a simplified version of the proof of the other conclusions.
\begin{proposition} The $f_k$ are locally uniformly bounded.
\end{proposition}
\begin{proof} 
 Given $x \in U$ and $s>0$, put 
\begin{equation}\label{def Q}
Q(x,s):= [x_1-s,x_1 + s]\times \dots \times [x_n-s,x_n + s]
\end{equation}
for the closed cube of side $2s$ centered at $x$. Let $r_0>0$ be small enough that $Q(x,r_0) \subset U$. We will show that
\begin{equation}\label{eq:sup}
\sup_{Q\left(x,\frac {r_0} 2\right)} |f_k| \le C
\end{equation}
for some constant $C$ independent of $k$.

For $z \in (0,r_0)^n$, put
\begin{equation}\label{def R}
R(z)= R(x,z):=[x_1-z_1,x_1+z_1]\times\dots\times[ x_n- z_n,x_n+z_n].
\end{equation}
Let $\mathcal F_d(z)$ denote the set of $d$-dimensional faces of $R(z)$. For $F \in \mathcal F_d(z)$, put $\vec F$ for the $d$-dimensional affine space it generates.

Consider the functions $g_{k,d}: (0,r_0)^n\to \R$ given by
\begin{align}
\notag g_{k,d}(z) &:= \sum_{ F\in \mathcal F_d(z)} \int_F  \left|\det D^2(\left. f_k\right|_{F})\right|\\
\label{g ineq}&\le \sum_{F\in \mathcal F_d(z)} \int_{\vec F\cap Q(x, r_0)}  \left|\det D^2(\left. f_k\right|_{\vec F})\right|, \quad d=1,\dots,n, \\
\notag g_{k,0}(z)&:= \sum_{y \in \mathcal F_0(z)} |f_k(y)|,\\
\notag G_k&:= \sum_{d=0}^n g_{k,d}, \quad k=1,2,\dots.
\end{align}
Then by Fubini's theorem and \eqref{g ineq}
\begin{align*}
\int_{(0,r_0)^n} g_{k,d}& \le r_0^d\sum_{|I|=d}\int _{Q(x,r_0)}\left|\det \left[ \frac{\partial^2f_k}{\partial x_i \partial x_j}\right]_{i,j\in I}\right|\le C, \quad d=1,\dots, n \\
\int_{(0,r_0)^n} g_{k,0}& = \int_{Q(x,r_0)} |f_k| \le C,
\end{align*}
for some constant $C$, independent of $k$, where the sum is over all subsets $I\subset \{1,\dots,n\}$ of the indicated cardinality.

By Fatou's lemma,
\begin{align*}
\int_{(0,r_0)^n} \liminf_{k\to \infty }G_k &\le \liminf_{k\to \infty }\int_{(0,r_0)^n} G_k \le (n+1)C.
\end{align*}
Taking a subsequence if necessary we may therefore find $z^* \in (0,r_0)^n -(0,\frac{r_0}2)^n$ such that $g_{k,d}(z^*)\le G_k(z^*) \le \frac{2^n}{2^n-1}(n+1)C  r_0^{-n}=:C'$ for all sufficiently large $k$.

In particular
$$
|f_k(v)| < C'
$$
for all of the vertices $v \in \mathcal F_0(z^*)$ of the rectangle $R(z^*)$.
Applying Lemma \ref{lem:alex} to the faces of $R(z^*)$ we find that
$$
\sup_{F\in \mathcal F_{d}(z^*)} |f_k| \le \sup_{F\in \mathcal F_{d-1}(z^*)} |f_k| + \omega_{d}^{-\frac 1 d}g_{k,d}(z^*)^{\frac 1 d} d^{\frac 1 2} r_0\le \sup_{F\in \mathcal F_{d-1}(z^*)} |f_k| + \omega_{d}^{-\frac 1 d}(C')^{\frac 1 d}  d^{\frac 1 2} r_0,
$$
$d=1,\dots,n$.
Proceeding by induction on $d$, the case $d=n$ yields \eqref{eq:sup}.
\end{proof}

\subsection{Proof of conclusions \eqref{two} and \eqref{three}} The proof of Theorem \ref{main theorem} will be completed in the following Proposition. Put $\nu:= \nu_1+\dots+\nu_n$ for the sum of the absolute Hessian determinant measures of $f$, and $\nu = \nu_{ac}+ \nu_s$ the decomposition of $\nu$ into its absolutely continuous and singular parts. Let $\phi$ denote the density function of $\nu_{ac}$ with respect to Lebesgue measure. We put also
$$
\bar f:= \limsup_k f_k, \quad \underline f:= \liminf_k f_k
$$

\begin{proposition} Suppose $x_0\in U$, and that
\begin{enumerate}
\item $\Theta (\nu_s,x_0) =0$;
\item $x_0$ is a Lebesgue point of $f$, of $\nabla f$ and of $\phi$;
\item $f$ is twice differentiable in the the $L^1$ sense at $x_0$.
\end{enumerate}
 Then $\bar f(x_0)=\underline f(x_0)$, and both $\bar f,\underline f$ are twice differentiable at $x_0$.
\end{proposition}
\begin{proof} We may assume that $x_0=0$, and by Lemma \ref{quadratic perturbation} we may also assume that $f(0) = \nabla f(0) = D^2f(0) = 0$, where $D^2f(0)$ is the $L^1$ second derivative of $f$ at $0$. 
Let $\eps \in(0,1)$ be given, and take $r_0>0$ small enough that if $r\in (0,r_0)$ then, referring to the definition \eqref{def Q},
\begin{equation}\label{eq:small enough}
\nu_s(Q(0,2r)) + \int_{Q(0,2r)} \left(\frac {|f|}{\eps^2 r^2}+ |\phi(x)-\phi(0)| \right)  < (\eps r)^n.
\end{equation}

Let $0\ne x \in Q(0,r_0)$. We will show that for large $k$
\begin{equation}\label{eq:star}
|f_k(x)| < M\eps^2 |x|^2
\end{equation}
 where $M$ depends only $\phi(0)$ and the dimension $n$. 

Put $r:= \max(|x_1|,\dots,|x_n|)$ and 
\begin{equation}\label{eq:def g}
g_{k,d}:= \sum_{|I|=|J|= d}\left|\det\left[\frac{\partial^2 f_k}{\partial x_i\partial x_j}\right]_{i\in I, j\in J}\right|.
\end{equation}
Then
\begin{align}
\notag\limsup_{k\to \infty} \int_{Q(x,\eps r)} \sum_d g_{k,d}&\le \nu(\bar Q(x,\eps r))\\
\notag& = \nu_s(\bar Q(x,\eps r)) + \int_{Q(x,\eps r)} \phi \\
\label{eq:small int}&\le \nu_s(Q(0,2 r)) + (2\eps r)^n \phi(0)+ \int_{Q(0,2 r)} |\phi(y) -\phi(0)|  \\
\notag&< (1+2^n\phi(0)) (\eps r)^n
\end{align}
by \eqref{eq:small enough}.

Referring to the definitions of \eqref{def R} ff., consider the functions $h_k: (0,\eps r)^n\to \R$ given by
\begin{align*}
h_{k,0}(z)&:= \frac 1 {\eps^2 r^2}\sum_{w \in \mathcal F_0(z)} |f_k(w)|, \\
h_{k,d}(z) &:= \sum_{ F\in \mathcal F_d(z)} \int_F  \left|\det D^2(\left. f_k\right|_{F})\right|\\
&\le \sum_{F\in \mathcal F_d(z)} \int_{\vec F\cap Q(x,\eps r)}  \left|\det D^2(\left. f_k\right|_{\vec F})\right|, \quad d=1,\dots,n.
\end{align*}
Clearly
\begin{equation*}
\int_{(0,\eps r)^n} h_{k,0}(z)= \frac 1 {\eps^2r^2}\int_{Q(x,\eps r)}| f_k(w)|<(\eps r)^n
\end{equation*}
for large values of $k$, by \eqref{eq:small enough} and the local $L^1$ convergence $f_k\to f$.
Furthermore
\begin{align*}
\sum_{d=1}^n (\eps r)^{-d}\int_{(0,\eps r)^n} h_{k,d}(z) \, dz&\le\sum_{d=1}^n (\eps r)^{-d}\int_{(0,\eps r)^n} dz\sum_{F\in \mathcal F_d(z)} \int_{\vec F\cap Q(x,\eps r)}  \left|\det D^2(\left. f_k\right|_{\vec F})\right| \\
&= \sum_{d=1}^n \int_{Q(x,\eps r)} g_{k,d}\\
&< (\eps r)^{n} (1+2^n\phi(0)), \quad d=1,\dots,n,
\end{align*}
for $k$ large enough, by \eqref{eq:small int} and Fubini's theorem. Putting
$$
H_k(z):= h_{k,0}(z) + \sum_{d=1}^n (\eps r)^{-d} h_{k,d}(z),
$$
Fatou's lemma yields
\begin{align*}
\int_{(0,\eps r)^n}\liminf_{k\to \infty} H_k(z)\, dz &\le\liminf_{k\to \infty} \int_{(0,\eps r)^n}H_k(z)\, dz \\
&< (\eps r)^n (2+2^n\phi(0)).
\end{align*}
Therefore, taking subsequences as necessary, there exists $z^*\in (0,\eps)^n$  such that
\begin{equation}\label{eq:Hk}
 H_k(z^*) < 2+2^n\phi(0)=:C
\end{equation}
 for all sufficiently large $k$.

For $k$ large the values of $f$ at the vertices $v\in \mathcal F_0(z^*)$ of $R(z^*)$ satisfy
$$
|f_k(v)| <C(\eps r)^2.
$$
Proceeding by induction on the dimension $d$, we claim that for large $k$
$$
\sup_{y \in F\in \mathcal F_d(z^*)} |f_k(y)| \le C A_d \eps^2 r^2, \quad d=1,\dots, n,
$$
where $C$ is the constant from \eqref{eq:Hk} and $A_d$ depends only on $d$.
To see this we observe that \eqref{eq:Hk} yields
\begin{equation}\label{eq:face integrals}
\int_{F} \left|\det D^2\left.(f_k\right|_{F})\right|  <C (\eps r)^d, \quad d=1,\dots n.
\end{equation}
for $F \in \mathcal F^d(z^*)$ and large $k$.
Thus by Lemma \ref{lem:alex}, for each such face $F$
\begin{align*}\label{eq:apply alex}
(\sup_{F} |f_k| - \sup_{\partial F} |f_k|)^d&\le \left({\diam F}\right)^d\omega_d^{-1}  C(\eps r)^d \\
&\le C\omega_d^{-1} (\eps r)^{2d}{d}^{\frac  d 2}
\end{align*}
whence
\begin{align*}
\sup_{F} |f_k| &\le  \sup_{\partial F} |f_k|+C^{\frac 1 d}\omega_d^{-\frac 1 d} \sqrt d (\eps r)^2\\
& \le( CA_{d-1} +C^{\frac 1 d}\omega_d^{-\frac 1 d} \sqrt d )(\eps r)^2,
\end{align*}
as claimed.

Now \eqref{eq:star} holds with $M= CA_n$. A similar, easier argument shows that $\lim_k f_k(0) = 0 = f(0)$.
\end{proof}


\begin{thebibliography}{99}





\bibitem{alb-amb}Alberti, G., Ambrosio, L. A geometrical approach to monotone functions in $\R^n$.  Math. Z.  230  (1999), pp. 259--316.



\bibitem{alexandrov}  Aleksandrov,  A.D. Almost everywhere existence of the second differential of
a convex function and some properties of convex surfaces connected with it.
Leningrad Univ. Ann. (Math. ser.) {\bf 6} (1939), 3�35 (in Russian).


\bibitem{cz} Calderon, A.P. and Zygmund, A. Local properties of elliptic partial differential equations. {\it Studia Math.} {\bf 20} (1961), 171--225

\bibitem{ev-gar} Evans, L.C. and Gariepy, R. {\it Measure theory and fine properties of functions.} CRC Press, Boca Raton, 1992.
  

\bibitem{folland} Folland, G.B. {\it Real analysis}. Wiley, New York, 1999.

\bibitem{fu89-I} Fu, J. H. G.: Monge-Amp\`ere functions I. {\it Indiana Univ. Math. J.} \textbf{38} (1989), 745--771

\bibitem{fu89-II} Fu, J. H. G.: Monge-Amp\`ere functions II. {\it Indiana Univ. Math. J.} \textbf{38} (1989),773--789


\bibitem{gt} Gilbarg, D. and Turdinger, N. {\it Elliptic Partial Differential Equations of Second Order}. Springer, New York, 1983 

\bibitem{giusti} Giusti, E. {\it Minimal Surfaces and Functions of Bounded Variation}. Birkh\"auser, Boston, 1984
 
\bibitem{HuMe} Hutchinson, J.E. and Meier, M.
A remark on the nonuniqueness of tangent cones.
Proc. Amer. Math. Soc. 97 (1986), pp. 184--185. 
  
\bibitem{jerrard07a} Jerrard, R.L.: Some remarks on Monge-Amp\`ere functions.{\it
	Singularities in PDE and the Calculus of Variations}, Stanley Alama, Lia
Bronsard, and Peter J. Sternberg, eds. - AMS | CRM, 2008


\bibitem{jerrard07b} Jerrard, R.L.: Some rigidity results related to Monge-Amp\`ere functions. To appear in {\it Canadian Journal of Mathematics}
 
%




\end{thebibliography}
\end{document}